\newtheoremstyle{plain}% name
  {.5\baselineskip\@plus.2\baselineskip\@minus.2\baselineskip}% Space above
  {.5\baselineskip\@plus.2\baselineskip\@minus.2\baselineskip\@plus.5em}% Space below
  {\slshape}% Body font
  {}%Indent amount (empty = no indent, \parindent = para indent)
  {\bfseries}%  Thm head font
  {.}%       Punctuation after thm head
  { }%      Space after thm head: " " = normal interword space;
\newtheoremstyle{definition}% name
  {.5\baselineskip\@plus.2\baselineskip\@minus.2\baselineskip}% Space above
  {0.8\baselineskip\@plus.2\baselineskip\@minus.2\baselineskip\@plus.5em}% Space below
  {}
  {}
  {\bfseries}
  {.}
  { }
  {}
\newcommand{\eqnum}{\refstepcounter{equation}\textup{\tagform@{\theequation}}}
\newtheorem{thm}{Theorem}
\newtheorem{thmX}{Theorem}
\newtheorem*{thm*}{Theorem}
\newtheorem{prop}{Proposition}
\newtheorem*{defthm*}{Definition/Theorem}
\theoremstyle{definition}
\newtheorem{rem}{Remark}
\newtheorem*{exam*}{Example}
\newcommand\arXiv[1]{\href{http://arxiv.org/abs/#1}{arXiv:#1}}
\newcommand{\nc}{\newcommand}
\nc{\renc}{\renewcommand}
\nc{\ssec}{\subsection}
\nc{\sssec}{\subsubsection}
\nc{\on}{\operatorname}
\nc{\term}[1]{#1\xspace}
\tikzset{
  commutative diagrams/.cd,
  arrow style=tikz,
  diagrams={>=latex}}
\tikzset{
  column sep/.code=\def\pgfmatrixcolumnsep{\pgf@matrix@xscale*(#1)},
  row sep/.code   =\def\pgfmatrixrowsep{\pgf@matrix@yscale*(#1)},
  matrix xscale/.code=%
    \pgfmathsetmacro\pgf@matrix@xscale{\pgf@matrix@xscale*(#1)},
  matrix yscale/.code=%
    \pgfmathsetmacro\pgf@matrix@yscale{\pgf@matrix@yscale*(#1)},
  matrix scale/.style={/tikz/matrix xscale={#1},/tikz/matrix yscale={#1}}}
\def\pgf@matrix@xscale{1}
\def\pgf@matrix@yscale{1}
\setlist[enumerate,1]{label={(\alph*)},itemsep=\parskip}
\newlist{thmlist}{enumerate}{2}
\setlist[thmlist,1]{
  label={\em(\roman*)}, ref={(\roman*)},
  itemsep=0.5em,
  % leftmargin=0pt,
  align=right,widest=vi)}
\setlist[thmlist,2]{
  label={\em(\alph*)}, ref={(\alph*)},
  itemsep=0.75em,
  labelsep=0em,labelindent=0em,leftmargin=*,align=left,widest=vi),
  topsep=0.75em}
\newlist{thmlistbis}{enumerate}{1}
\setlist[thmlistbis,1]{
  label={\em(\roman*~\textit{bis})},
  ref={(\roman*}~\textit{bis}\upshape{)},
  itemsep=0.5em,
  leftmargin=0pt, align=right, widest=vi)}
\newlist{defnlist}{enumerate}{2}
\setlist[defnlist,1]{
  label={(\roman*)}, ref={(\roman*)},
  itemsep=0.5em,
  % leftmargin=0pt,
  align=right, widest=vi)}
\setlist[defnlist,2]{
  label={(\alph*)}, ref={(\alph*)},
  itemsep=0.75em,
  labelsep=0em,labelindent=0em,leftmargin=*,align=left,widest=vi),
  topsep=0.75em}
\newlist{inlinelist}{enumerate*}{1}
\setlist[inlinelist,1]{label={(\alph*)}}
\newlist{inlinedefnlist}{enumerate*}{1}
\definecolor{green}{HTML}{38550C}
\setlist[inlinedefnlist,1]{label={\color{green}(\roman*)}}
\nc{\sA}{\ensuremath{\mathcal{A}}\xspace}
\nc{\sB}{\ensuremath{\mathcal{B}}\xspace}
\nc{\sC}{\ensuremath{\mathcal{C}}\xspace}
\nc{\sD}{\ensuremath{\mathcal{D}}\xspace}
\nc{\sE}{\ensuremath{\mathcal{E}}\xspace}
\nc{\sF}{\ensuremath{\mathcal{F}}\xspace}
\nc{\sG}{\ensuremath{\mathcal{G}}\xspace}
\nc{\sH}{\ensuremath{\mathcal{H}}\xspace}
\nc{\sI}{\ensuremath{\mathcal{I}}\xspace}
\nc{\sJ}{\ensuremath{\mathcal{J}}\xspace}
\nc{\sK}{\ensuremath{\mathcal{K}}\xspace}
\nc{\sL}{\ensuremath{\mathcal{L}}\xspace}
\nc{\sM}{\ensuremath{\mathcal{M}}\xspace}
\nc{\sN}{\ensuremath{\mathcal{N}}\xspace}
\nc{\sO}{\ensuremath{\mathcal{O}}\xspace}
\nc{\sP}{\ensuremath{\mathcal{P}}\xspace}
\nc{\sQ}{\ensuremath{\mathcal{Q}}\xspace}
\nc{\sR}{\ensuremath{\mathcal{R}}\xspace}
\nc{\sS}{\ensuremath{\mathcal{S}}\xspace}
\nc{\sT}{\ensuremath{\mathcal{T}}\xspace}
\nc{\sU}{\ensuremath{\mathcal{U}}\xspace}
\nc{\sV}{\ensuremath{\mathcal{V}}\xspace}
\nc{\sW}{\ensuremath{\mathcal{W}}\xspace}
\nc{\sX}{\ensuremath{\mathcal{X}}\xspace}
\nc{\sY}{\ensuremath{\mathcal{Y}}\xspace}
\nc{\sZ}{\ensuremath{\mathcal{Z}}\xspace}
\nc{\bA}{\ensuremath{\mathbf{A}}\xspace}
\nc{\bB}{\ensuremath{\mathbf{B}}\xspace}
\nc{\bC}{\ensuremath{\mathbf{C}}\xspace}
\nc{\bD}{\ensuremath{\mathbf{D}}\xspace}
\nc{\bE}{\ensuremath{\mathbf{E}}\xspace}
\nc{\bF}{\ensuremath{\mathbf{F}}\xspace}
\nc{\bG}{\ensuremath{\mathbf{G}}\xspace}
\nc{\bH}{\ensuremath{\mathbf{H}}\xspace}
\nc{\bI}{\ensuremath{\mathbf{I}}\xspace}
\nc{\bJ}{\ensuremath{\mathbf{J}}\xspace}
\nc{\bK}{\ensuremath{\mathbf{K}}\xspace}
\nc{\bL}{\ensuremath{\mathbf{L}}\xspace}
\nc{\bM}{\ensuremath{\mathbf{M}}\xspace}
\nc{\bN}{\ensuremath{\mathbf{N}}\xspace}
\nc{\bO}{\ensuremath{\mathbf{O}}\xspace}
\nc{\bP}{\ensuremath{\mathbf{P}}\xspace}
\nc{\bQ}{\ensuremath{\mathbf{Q}}\xspace}
\nc{\bR}{\ensuremath{\mathbf{R}}\xspace}
\nc{\bS}{\ensuremath{\mathbf{S}}\xspace}
\nc{\bT}{\ensuremath{\mathbf{T}}\xspace}
\nc{\bU}{\ensuremath{\mathbf{U}}\xspace}
\nc{\bV}{\ensuremath{\mathbf{V}}\xspace}
\nc{\bW}{\ensuremath{\mathbf{W}}\xspace}
\nc{\bX}{\ensuremath{\mathbf{X}}\xspace}
\nc{\bY}{\ensuremath{\mathbf{Y}}\xspace}
\nc{\bZ}{\ensuremath{\mathbf{Z}}\xspace}
\nc{\bbA}{\ensuremath{\mathbb{A}}\xspace}
\nc{\bbB}{\ensuremath{\mathbb{B}}\xspace}
\nc{\bbC}{\ensuremath{\mathbb{C}}\xspace}
\nc{\bbD}{\ensuremath{\mathbb{D}}\xspace}
\nc{\bbE}{\ensuremath{\mathbb{E}}\xspace}
\nc{\bbF}{\ensuremath{\mathbb{F}}\xspace}
\nc{\bbG}{\ensuremath{\mathbb{G}}\xspace}
\nc{\bbH}{\ensuremath{\mathbb{H}}\xspace}
\nc{\bbI}{\ensuremath{\mathbb{I}}\xspace}
\nc{\bbJ}{\ensuremath{\mathbb{J}}\xspace}
\nc{\bbK}{\ensuremath{\mathbb{K}}\xspace}
\nc{\bbL}{\ensuremath{\mathbb{L}}\xspace}
\nc{\bbM}{\ensuremath{\mathbb{M}}\xspace}
\nc{\bbN}{\ensuremath{\mathbb{N}}\xspace}
\nc{\bbO}{\ensuremath{\mathbb{O}}\xspace}
\nc{\bbP}{\ensuremath{\mathbb{P}}\xspace}
\nc{\bbQ}{\ensuremath{\mathbb{Q}}\xspace}
\nc{\bbR}{\ensuremath{\mathbb{R}}\xspace}
\nc{\bbS}{\ensuremath{\mathbb{S}}\xspace}
\nc{\bbT}{\ensuremath{\mathbb{T}}\xspace}
\nc{\bbU}{\ensuremath{\mathbb{U}}\xspace}
\nc{\bbV}{\ensuremath{\mathbb{V}}\xspace}
\nc{\bbW}{\ensuremath{\mathbb{W}}\xspace}
\nc{\bbX}{\ensuremath{\mathbb{X}}\xspace}
\nc{\bbY}{\ensuremath{\mathbb{Y}}\xspace}
\nc{\bbZ}{\ensuremath{\mathbb{Z}}\xspace}
\nc{\mrm}[1]{\ensuremath{\mathrm{#1}}\xspace}
\nc{\mit}[1]{\ensuremath{\mathit{#1}}\xspace}
\nc{\mbf}[1]{\ensuremath{\mathbf{#1}}\xspace}
\nc{\mcal}[1]{\ensuremath{\mathcal{#1}}\xspace}
\nc{\msc}[1]{\ensuremath{\mathscr{#1}}\xspace}
\nc{\sub}{\subseteq}
\nc{\too}{\longrightarrow}
\nc{\hook}{\hookrightarrow}
\nc{\hooklongrightarrow}{\lhook\joinrel\longrightarrow}
\nc{\hooklong}{\hooklongrightarrow}
\nc{\hooklongleftarrow}{\longleftarrow\joinrel\rhook}
\nc{\twoheadlongrightarrow}{\relbar\joinrel\twoheadrightarrow}
\nc{\longrightleftarrows}{\ \raisebox{0.3ex}{\(\mathrel{\substack{\xrightarrow{\rule{1em}{0em}} \\[-1ex] \xleftarrow{\rule{1em}{0em}}}}\)}\ }
\renc{\ge}{\geqslant}
\renc{\le}{\leqslant}
\nc{\id}{\mathrm{id}}
\DeclareMathOperator{\Hom}{\on{Hom}}
\nc{\uHom}{\underline{\smash{\Hom}}}
\DeclareMathOperator{\End}{\on{End}}
\DeclareMathOperator{\Sym}{\on{Sym}}
\nc{\uEnd}{\underline{\smash{\End}}}
\nc{\colim}{\varinjlim}
\renc{\lim}{\varprojlim}
\nc{\Cofib}{\on{Cofib}}
\nc{\Fib}{\on{Fib}}
\nc{\initial}{\varnothing}
\nc{\op}{\mathrm{op}}
\DeclareMathOperator*{\fibprod}{\times}
\renc{\setminus}{\smallsetminus}
\newcommand{\thmref}[1]{Theorem~\ref{#1}}
\newcommand{\propref}[1]{Proposition~\ref{#1}}
\renewcommand{\eqref}[1]{(\ref{#1})}
\newcommand{\itemref}[1]{\ref{#1}}
\nc{\pr}{{\mrm{pr}}}
\nc{\A}{\bA}
\nc{\Spec}{\on{Spec}}
\nc{\D}{\on{\mbf{D}}}
\nc{\et}{\mrm{\acute{e}t}}
\renc{\H}{\on{H}}
\renc{\sp}{\mrm{sp}}
\nc{\RGamma}{\on{\bR\Gamma}}
\nc{\R}{\bR}
\renc{\L}{\bL}
\nc{\otimesL}{\otimes^\bL}
\nc{\fibprodR}{\fibprod^\bR}
\nc{\gys}{\mrm{gys}}
\nc{\tr}{\mrm{tr}}
\nc{\Rlim}{\bR\!\!\varprojlim}
\nc{\inftyCat}{\term{$\infty$-category}}
\nc{\inftyCats}{\term{$\infty$-categories}}
\nc{\inftyGrpd}{\term{$\infty$-groupoid}}
\nc{\inftyGrpds}{\term{$\infty$-groupoids}}
\title{Absolute Poincaré duality in étale~cohomology}
\author[A.\,A. Khan]{Adeel A. Khan}
\date{2021-09-19}
\def\l@subsection{\@tocline{2}{0pt}{4pc}{6pc}{}}
\begin{document}

\begin{abstract}
  We extend Poincaré duality in étale cohomology from smooth schemes to regular ones.
  This is achieved via a formalism of trace maps for local complete intersection morphisms.
\end{abstract}

\maketitle

\setlength{\parindent}{0em}
\parskip 0.75em

Let $S$ be a base scheme and let $\Lambda$ denote the constant sheaf $\bZ/n\bZ$ for an integer $n$ which is invertible on $S$.
For a locally of finite type\footnote{%
  Classically, the $!$-operations were constructed for compactifiable morphisms between quasi-compact quasi-separated schemes (see \cite[Exp.~XVII]{SGA4}), which by Nagata is the class of separated morphisms of finite type.
  In \cite{LiuZheng}, they were extended to locally of finite type morphisms.
} $S$-scheme $X$, define the Borel--Moore homology\footnote{%
  If $S$ is regular and $f : X \to S$ is separated of finite type, then $K_X$ is a dualizing complex on $X$ by \cite[Exp.~XVII, Thm.~0.2]{ILO}.
  Professor Illusie has informed me that this definition of ``Borel--Moore homology'', as cohomology with coefficients in $K_X$, is in fact due to Grothendieck.
  See also \cite[\S 2]{Laumon}.
} of $X$ (relative to $S$) as cohomology with coefficients in $K_X := f^!(\Lambda)$, i.e.,
\[ \H_*(X/S, \Lambda) = \H^{-*}(X, K_X) \]
where $f : X \to S$ is the structural morphism.
Our starting point is the following classical result:

\begin{thm}[Poincaré duality]\label{thm:smooth}
  Let $X$ be a smooth $S$-scheme of relative dimension $d$.
  Then there is a canonical isomorphism
  \[ f^!(\Lambda) \simeq \Lambda(d)[2d] \]
  in the derived category $\D(X_\et, \Lambda)$ of étale sheaves of $\Lambda$-modules on $X$, where $f : X \to S$ is the structural morphism.
  In particular, there is a canonical isomorphism
  \begin{equation}\label{eq:a0s7gu1}
    \H_*(X/S, \Lambda) \simeq \H^{2d-*}(X, \Lambda(d)).
  \end{equation}
\end{thm}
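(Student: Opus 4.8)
The plan is to establish the fundamental isomorphism $f^!(\Lambda)\simeq\Lambda(d)[2d]$ for a smooth morphism $f:X\to S$ of relative dimension $d$; the stated cohomological consequence \eqref{eq:a0s7gu1} then follows immediately by taking $\H^{-*}(X,-)$ of both sides of $K_X=f^!(\Lambda)\simeq\Lambda(d)[2d]$ and reindexing. I would first reduce to a universal or local situation: because the formation of $f^!$ is compatible with base change in the smooth case, and because smoothness is étale-locally a product of affine spaces and étale maps, it suffices to construct a \emph{canonical} class and check it is an isomorphism after passing to geometric points. The heart of the matter is to produce the trace/orientation map rather than merely to know abstractly that source and target are isomorphic.

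\textbf{Main steps.} First I would treat the étale case $d=0$: for $f$ étale one has $f^!\simeq f^*$ canonically (the absolute purity in dimension zero), so $f^!(\Lambda)\simeq\Lambda=\Lambda(0)[0]$. Second, I would treat the single building block $\bbP^1_S\to S$ (or equivalently $\bbA^1_S$) by an explicit computation of the cohomology of the relative projective line, extracting the Tate twist $\Lambda(1)[2]$ from $\H^2(\bbP^1)$; the cycle class of a section provides the canonical generator and pins down the isomorphism so that it is independent of choices. Third, I would propagate along the two stable operations: multiplicativity of $f^!$ under composition, $(g\circ f)^!\simeq f^!\circ g^!$, lets me build affine space $\bbA^d_S$ by iterating the $\bbA^1$ case, yielding $f^!(\Lambda)\simeq\Lambda(d)[2d]$ there, and then composing with an étale map covers an arbitrary smooth $X$. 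Finally, I would verify that the locally defined isomorphisms glue: the constructed class is compatible with restriction to opens and with the base change and composition isomorphisms, hence descends to a well-defined global map in $\D(X_\et,\Lambda)$.

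\textbf{The main obstacle.} I expect the delicate point to be \emph{canonicity and compatibility}, not the bare existence of an abstract isomorphism. It is easy to see that $f^!(\Lambda)$ is invertible and concentrated in the right degree with the right twist; the real work is producing a fundamental class $\eta_f\in\H^{2d}(X,f^!\Lambda(d))$ (equivalently a trace map $f_!\Lambda(d)[2d]\to\Lambda$) that is simultaneously (i) compatible with base change along $S'\to S$, (ii) compatible with composition of smooth morphisms so that the $\bbP^1$-generators multiply correctly, and (iii) normalized so that in the étale case it recovers the identity. Checking that these three compatibilities are mutually consistent — so that the isomorphism one writes down is truly canonical and does not depend on the chosen factorization through affine spaces and étale maps — is where the subtlety lies. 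Since \thmref{thm:smooth} is cited as a known classical result (the reference is to \cite{SGA4}, Exp.~XVIII), I would in practice invoke the existing construction of the trace map there and merely record how the cohomological form \eqref{eq:a0s7gu1} is deduced; a self-contained proof would need to reproduce that trace formalism.
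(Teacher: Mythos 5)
Your proposal is consistent with the paper's treatment of this statement: the paper offers no proof of \thmref{thm:smooth} at all, citing it as classical (\cite[Exp.~XVIII, Thm.~3.2.5]{SGA4}, and \cite[Thm.~0.1.4]{LiuZheng} when $X$ is not separated of finite type) and noting only that it is proven by constructing the trace formalism $\tr_f : f_!\Lambda(d)[2d] \to \Lambda$ for flat morphisms with fibres of dimension $\le d$ --- precisely the trace-map strategy your sketch outlines, with \eqref{eq:a0s7gu1} then deduced by taking cohomology. Your identification of canonicity and the compatibilities of the trace (base change, composition, étale normalization) as the real content, together with your fallback of simply invoking \cite[Exp.~XVIII]{SGA4}, matches exactly what the paper does.
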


See \cite[Exp.~XVIII, Thm.~3.2.5]{SGA4}, and \cite[Thm.~0.1.4]{LiuZheng} in case $X$ is not separated of finite type.
\thmref{thm:smooth} is proven by constructing a formalism of traces\footnote{%
  Throughout the note, all functors are implicitly derived (wherever necessary).
}
\[ \tr_f : f_!\Lambda(d)[2d] \to \Lambda \]
for flat morphisms $f$ whose geometric fibres are of dimension $\le d$ (see \cite[Exp.~XVIII, Thm.~2.9]{SGA4}).
By adjunction the trace gives rise to a \emph{fundamental class}
\[ [X] \in \H_{2d}(X/S, \Lambda)(-d), \]
and the isomorphism \eqref{eq:a0s7gu1} can be realized as cap product with $[X]$.

In this paper our goal is to prove an ``absolute'' version of \thmref{thm:smooth}, where $X$ is a regular scheme over a regular base scheme $S$, and the morphism $f : X \to S$ is only assumed to be locally of finite type.
To that end we construct a formalism of traces for \emph{local complete intersection} morphisms.

\begin{thmX}\label{thm:gys}
  Let $X$ and $Y$ be schemes on which $n$ is invertible.
  For every local complete intersection morphism $f : X \to Y$ of relative virtual dimension $d$, there is a canonical morphism
  \[ \tr_f : f_!\Lambda(d)[2d] \to \Lambda, \]
  in $\D(Y_\et, \Lambda)$ satisfying the following properties:
  \begin{thmlist}
    \item
    \emph{Functoriality.}
    Given another local complete intersection morphism $g : Y \to Z$ of relative virtual dimension $e$, the composite $g\circ f$ is a local complete intersection morphism of relative virtual dimension $d+e$, and there is a commutative diagram
    \[ \begin{tikzcd}
      g_! f_!\Lambda(d)[2d](e)[2e] \ar{r}\ar[equals]{d}
      & g_!\Lambda (e)[2e] \ar{d}{\tr_g}
      \\
      (g\circ f)_!\Lambda(d+e)[2d+2e] \ar{r}{\tr_{g\circ f}}
      & \Lambda
    \end{tikzcd} \]
    in $\D(Z_\et, \Lambda)$.
    If $f=\id_X$, then $\tr_f = \id : \Lambda \to \Lambda$.

    \item
    \emph{Transverse base change.}
    Given any morphism $q: Y' \to Y$, form the cartesian square
    \begin{equation*}
      \begin{tikzcd}
        X' \ar{r}{g}\ar{d}{p}
          & Y'\ar{d}{q}
        \\
        X \ar{r}{f}
          & Y.
      \end{tikzcd}
    \end{equation*}  
    If this square is Tor-independent (e.g. if $q$ is flat), then $g$ is a local complete intersection morphism of relative virtual dimension $d$, and there is a commutative square
    \[ \begin{tikzcd}
      g_!\Lambda(d)[2d] \ar{r}{\tr_g}\ar[equals]{d}
      & \Lambda \ar[equals]{d}
      \\
      q^* f_!\Lambda(d)[2d] \ar{r}{q^*(\tr_f)}
      & q^*\Lambda
    \end{tikzcd} \]
    in $\D(Y'_\et, \Lambda)$.

    \item\label{item:pure}
    \emph{Purity.}
    Denote by
    \[ \gys_f : \Lambda(d)[2d] \to f^!\Lambda \]
    the morphism in $\D(Y_\et, \Lambda)$ obtained from $\tr_f$ by transposition.
    If $f$ smooth, or if $X$ and $Y$ are regular, then $\gys_f$ is an isomorphism.

    \item
    If $f$ is smooth, then $\tr_f$ agrees with the trace morphism of \cite[Exp.~XVIII, Thm.~2.9]{SGA4} (or rather \cite[Thm.~0.1.4]{LiuZheng} in the non-compactifiable case).

    \item
    If $f$ is a regular closed immersion, then $\gys_f$ coincides with the Gysin morphism $\mrm{Cl}_f : \Lambda \to f^!\Lambda(-d)[-2d]$
    constructed in \cite[\S 2.3]{ILO} and \cite[\S 1]{Azumino} (that is, $\gys_f = \mrm{Cl}_f(d)[2d]$).
    In particular, it refines the local cycle class of \cite[\S 2.2]{Cycle}.
  \end{thmlist}
\end{thmX}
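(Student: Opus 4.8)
The strategy is to reduce to the two elementary classes of lci morphisms and to assemble the general trace by the composition law~(i). Every lci morphism $f : X \to Y$ of relative virtual dimension $d$ admits, Zariski-locally on $X$, a factorization $f = p \circ i$ in which $i : X \hook P$ is a regular closed immersion of some codimension $c$ and $p : P \to Y$ is smooth of relative dimension $N$, with $d = N - c$; for instance one may take $P$ to be an open subscheme of $\bbA^{N}_{Y}$. Rather than the trace itself it is convenient to construct its transpose $\gys_f : \Lambda(d)[2d] \to f^!\Lambda$, as this is a cohomology class on the source $X$ and can therefore be defined Zariski-locally on $X$ and glued, provided the local definitions agree on overlaps. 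The task thus splits into: defining $\gys_p$ for smooth $p$; defining $\gys_i$ for regular closed immersions $i$; declaring $\gys_f$ to be the composite prescribed by~(i); and proving that this is independent of the chosen factorization.

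For smooth morphisms I would take $\tr_p$ to be the classical trace of \cite[Exp.~XVIII]{SGA4} (or \cite{LiuZheng} in the non-compactifiable case), so that property~(iv) holds by definition and, by \thmref{thm:smooth}, $\gys_p$ is already an isomorphism. For a regular closed immersion $i$ I would take $\gys_i$ to be the Gysin map of \cite[\S2.3]{ILO} and \cite[\S1]{Azumino}, so that property~(v) holds by definition. The geometric content of the latter is organized by deformation to the normal cone: the immersion $i$ deforms over $\bbA^{1}$ to the zero section $X \hook N_{X/P}$ of its normal bundle, for which $\gys$ is the Thom isomorphism, and $\gys_i$ is recovered from it by specialization. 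This viewpoint is what renders the subsequent compatibilities tractable.

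The heart of the matter --- and the step I expect to be the main obstacle --- is \emph{well-definedness together with the composition law~(i)}. Both reduce to three compatibilities among the building blocks: (1) the Gysin maps of two composable regular closed immersions compose correctly, which I would establish by a double deformation to the normal cone relating $N_{X/P}$, the restriction of $N_{P/Q}$, and $N_{X/Q}$; (2) two composable smooth morphisms, which is classical; and, crucially, (3) the smooth trace and the closed-immersion Gysin map commute when the order of a smooth projection and a regular immersion is exchanged. Compatibility~(3) is genuinely delicate, because the two maps are produced by entirely different mechanisms --- the relative trace of SGA~4 versus the cycle-class construction of \cite{ILO} --- and reconciling them is where the real work lies; the natural tool is once more the deformation space, together with the self-intersection formula for a zero section, which allows the composite to be computed after specialization to the normal bundle. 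Granting (1)--(3), two local factorizations $X \hook P_1 \to Y$ and $X \hook P_2 \to Y$ of a given $f$ are compared through their common refinement $P_1 \times_Y P_2$, which yields both independence of the factorization and the gluing of the local definitions of $\gys_f$; functoriality~(i) in general then follows by concatenating factorizations of $f$ and of $g$.

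With the trace in hand, the remaining properties are comparatively formal. For transverse base change~(ii) one first notes that Tor-independence guarantees that the pulled-back morphism $g$ is again lci of the same virtual dimension and that a factorization $f = p \circ i$ pulls back to such a factorization of $g$; the assertion then reduces to base change for each building block, which holds for the smooth trace by the classical projection formula and for the closed-immersion Gysin map because deformation to the normal cone commutes with Tor-independent base change. Finally, for purity~(iii) in the regular case I would factor $f = p \circ i$ with $Y$ regular, so that the ambient scheme $P$, being smooth over $Y$, and the source $X$ are both regular; then $\gys_p$ is an isomorphism by \thmref{thm:smooth}, while $\gys_i$ is an isomorphism by absolute purity (Gabber's theorem, \cite{ILO}) for the regular closed immersion $i$ between regular schemes. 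Since $\gys_f$ is their composite by~(i), it too is an isomorphism, as desired.
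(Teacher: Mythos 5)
Your overall architecture (factor locally as a regular immersion into a smooth $Y$-scheme, define $\gys$ for the two building blocks, prove independence of the factorization via double deformation and the smooth/immersion exchange) is the classical route, and your steps (1)--(3) are essentially the associativity results proven in \cite[Thm.~3.2.21]{DegliseJinKhan}. But there is a genuine gap at the step you pass over in a single clause: ``can therefore be defined Zariski-locally on $X$ and glued, provided the local definitions agree on overlaps.'' The Gysin map is a morphism $\Lambda(d)[2d] \to f^!\Lambda$ in the derived category $\D(X_\et,\Lambda)$, i.e.\ a degree-zero hypercohomology class of the complex $f^!\Lambda(-d)[-2d]$, and the presheaf $U \mapsto \Hom_{\D(U_\et,\Lambda)}\bigl(\Lambda(d)[2d], f^!\Lambda|_U\bigr)$ is \emph{not} a Zariski sheaf: agreement on overlaps guarantees neither existence nor uniqueness of a glued morphism, because of contributions from $\H^{-1}$ of this complex on the (multiple) overlaps of the cover. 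To make your argument run you would need either homotopy-coherent descent data (compatibility of the local classes \emph{and} of the homotopies comparing them, through all higher overlaps, in an $\infty$-categorical enhancement), or a semi-purity vanishing theorem asserting that $f^!\Lambda(-d)[-2d]$ has no cohomology in negative degrees --- equivalently $i^!\Lambda \in \D^{\ge 2c}$ for a regular immersion of codimension $c$ into a possibly \emph{non-regular} ambient scheme --- which is itself a nontrivial statement of the same nature as what is being constructed. Nor can you sidestep the issue by assuming a global factorization: the theorem is asserted for arbitrary lci morphisms of schemes on which $n$ is invertible, and such morphisms need not be globally smoothable.

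This is precisely the difficulty the paper's construction is designed to avoid. Instead of choosing local factorizations, the paper uses the normal stack $N_{X/Y} = \bV_X(L_{X/Y}[-1])$, which exists globally and involves no choices for any lci $f$ (it is an algebraic stack, locally the quotient $[N_{X/M}/i^*T_{M/Y}]$ under a factorization), together with the deformation space $D_{X/Y}$ to the zero section $X \to N_{X/Y}$, constructed as a derived Weil restriction. The trace is then produced in one global step: the localization triangle in Borel--Moore homology of stacks gives the specialization map $\sp_{X/Y}$, homotopy invariance for vector bundle stacks identifies $\RGamma(N_{X/Y}/Y,\Lambda)$ with $\RGamma(X/Y,\Lambda)(-d)[-2d]$, and the image of $1$ is the class $[X/Y]$ defining $\gys_f$ and $\tr_f$; no gluing of derived-category morphisms ever occurs, and functoriality and base change follow from the double deformation space and Tor-independent base change of $D_{X/Y}$. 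Your treatment of (ii)--(v) is otherwise in the spirit of the paper; in particular the paper proves (iii) for regular $X, Y$ by the same localization-and-factorization reduction to Gabber's absolute purity, which is legitimate \emph{there} because invertibility of an already-constructed global morphism, unlike its construction, genuinely is a Zariski-local condition.
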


Here local complete intersection (lci) morphisms are defined as in \cite[Exp.~VIII, \S 1, Déf.~1.1]{SGA6}.
For us the relevant description will be as follows: a morphism of schemes is lci if and only if it is locally of finite presentation and has perfect relative cotangent complex of Tor-amplitude $[-1,0]$ (under cohomological grading conventions).
See e.g. \cite[Prop.~2.3.14]{KhanRydh} for this equivalence.\footnote{%
  Surprisingly, this does not appear in \cite{SGA6} or other classical references.
  In fact, it seems to be a common misconception that this requires noetherianness or that it relies on Quillen's conjecture.
}

\begin{rem}
  Fix a base scheme $S$ on which $n$ is invertible.
  From \thmref{thm:gys} we can now read off:
  \begin{defnlist}
    \item
    If $X$ is an lci $S$-scheme of relative virtual dimension $d$ then it admits a \emph{fundamental class}
    \[ [X] \in \H_{2d}(X/S, \Lambda)(-d), \]
    given by the morphism $\gys_p(-d)[-2d] : \Lambda \to p^!\Lambda(-d)[-2d]$, where $p : X \to S$ is the structural morphism.\footnote{%
      If $S$ is a field and $X$ is quasi-projective, then this is the image of the fundamental class in the Chow group $\on{A}_d(X)$ by the cycle class map.
    }

    \item
    If $X$ and $S$ are regular and $d = \dim(X) - \dim(S)$, then $\gys_p : p^!\Lambda \simeq \Lambda(d)[2d]$ gives rise to canonical isomorphisms (``absolute Poincaré duality'')
    \begin{equation*}
      \cap [X] : \H^*(X, \Lambda) \to \H_{2d-*}(X/S, \Lambda)(-d).
    \end{equation*}

    \item
    For any lci morphism $f : X \to Y$ between $S$-schemes, $\gys_f : \Lambda(d)[2d] \to f^!\Lambda$ gives rise to Gysin pull-backs
    \[ f^! : \H_*(Y/S, \Lambda) \to \H_{*+2d}(X/S, \Lambda)(-d). \]

    \item
    For any \emph{proper} lci morphism $f : X \to Y$ of relative virtual dimension $d$, $\tr_f$ gives rise to Gysin push-forwards in cohomology
    \[ f_! : \H^*(X, \Lambda) \to \H^{*-2d}(Y, \Lambda(-d)). \]
  \end{defnlist}
\end{rem}

\begin{rem}
  Claim~\itemref{item:pure} in \thmref{thm:gys} contains in particular the statement that for any closed immersion $i : X \to Y$ between regular schemes $X$ and $Y$, there is an isomorphism $i^!(\Lambda)(d)[2d] \simeq \Lambda$ in $\D(X_\et, \Lambda)$.
  This is Grothendieck's absolute purity conjecture, proven by Gabber (see \cite[Exp.~I, 3.1.4]{SGA5}, \cite{Azumino}, \cite[Exp.~XVI, Thm.~3.1.1]{ILO}).
  However, the proof of \itemref{item:pure} uses this as input, i.e., we do not provide a new proof of absolute purity.
\end{rem}

\section{Deformation to the normal stack}

The main ingredient is \emph{deformation to the normal stack}, a variant of deformation to the normal cone that makes sense not just for closed immersions.

Given an lci morphism $f : X \to Y$ of schemes, the normal stack $N_{X/Y}$ is the ``total space'' of the $(-1)$-shifted cotangent complex $L_{X/Y}[-1]$.
To make sense of this, recall that the total space construction $\sE \mapsto \bV_X(\sE) = \Spec_X(\Sym_{\sO_X}(\sE))$ defines an equivalence between finite locally free sheaves and vector bundles over $X$.
This extends to an equivalence between perfect complexes of Tor-amplitude $[0,1]$ and vector bundle stacks over $X$, so that we can write
\[ N_{X/Y} := \bV_X(L_{X/Y}[-1]). \]
See \cite[\S 1.3]{KhanVirtual}, \cite[\S 2]{BehrendFantechi}, \cite[Exp.~XVIII, \S 1.4]{SGA4}.
In \cite{BehrendFantechi} this is called the ``intrinsic normal cone'' or ``intrinsic normal sheaf'' (they agree for lci morphisms).

If $f$ is a closed immersion, then $L_{X/Y}[-1]$ is just the conormal sheaf in degree zero so $N_{X/Y}$ is just the normal bundle.
In general, $L_{X/Y}[-1]$ will typically have nonzero cohomology in degree $1$, which is why $N_{X/Y}$ will only exist as an algebraic stack.
For example if $f$ is smooth then $L_{X/Y}[-1]$ is the cotangent sheaf in degree $-1$, so $N_{X/Y}$ is the classifying stack $BT_{X/Y}$ of the tangent bundle (viewed as a group scheme over $X$ under addition).
If there is a global factorization of $f$ through a regular immersion $i : X \hook M$ and a smooth morphism $p : M \to Y$, then $N_{X/Y}$ is isomorphic to the stack quotient
\[ N_{X/Y} \simeq [N_{X/M}/i^*T_{M/Y}] \]
where $N_{X/M}$ is the normal bundle of $i$ and $T_{M/Y}$ is the relative tangent bundle of $p$.
No choices are involved in the definitions of $L_{X/Y}$ and $N_{X/Y}$, i.e., they are intrinsic to $f$.

Deformation to the normal stack is an $\A^1$-family of algebraic stacks which deforms $f : X \to Y$ to the zero section $0 : X \to N_{X/Y}$.

\begin{thm}
  Let $f : X \to Y$ be an lci morphism.
  Then there exists a commutative diagram of algebraic stacks
  \begin{equation}\label{eq:D}
    \begin{tikzcd}
      X \ar{r}{0}\ar{d}{0}
      & X \times \A^1 \ar[leftarrow]{r}\ar{d}
      & X \times \bG_m \ar{d}{f\times \id}
      \\
      N_{X/Y} \ar{r}{\hat{i}}\ar{d}
      & D_{X/Y} \ar[leftarrow]{r}{\hat{j}}\ar{d}
      & Y \times \bG_m \ar[equals]{d}
      \\
      Y \ar{r}{0}
      & Y \times \A^1 \ar[leftarrow]{r}
      & Y \times \bG_m
    \end{tikzcd}
  \end{equation}
  where each square is cartesian and Tor-independent.
\end{thm}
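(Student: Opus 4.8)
The plan is to reduce the statement to the construction of a single algebraic stack $D_{X/Y}$, flat over $\A^1$, equipped with maps $X\times\A^1 \to D_{X/Y} \to Y\times\A^1$ over $\A^1$ whose restriction over $\bG_m\subset\A^1$ is the trivial family $Y\times\bG_m$ (with the map from $X\times\bG_m$ equal to $f\times\id$) and whose restriction over $0\in\A^1$ is the normal stack $N_{X/Y}$ (with the map from $X$ equal to the zero section $0$). Granting this, the whole of \eqref{eq:D} lies over the diagram $\{0\}\hook\A^1\hookleftarrow\bG_m$: the first two columns form the base change of the middle column along $\{0\}\hook\A^1$, and the last two columns form its base change along $\bG_m\hook\A^1$. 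Each individual square is then cartesian by the identification of the two fibres of $D_{X/Y}$ together with transitivity of fibre products. For Tor-independence, the open immersion $\bG_m\hook\A^1$ is flat, so the right-hand squares are automatically Tor-independent; and since $D_{X/Y}$ and $X\times\A^1$ are flat over $\A^1$, the coordinate $t$ is a non-zero-divisor on both, so base change along $\{0\}\hook\A^1$ kills higher Tor and the left-hand squares are Tor-independent as well. Thus everything reduces to producing $D_{X/Y}$ with its two fibres.

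To construct $D_{X/Y}$, I would first treat the case of a regular closed immersion $f=i$ cut out by an ideal $\sI\subset\sO_Y$, where the classical recipe applies: set $D_{X/Y}=\Spec_Y\big(\bigoplus_{n\in\bZ}\sI^{n}t^{-n}\big)$, the relative spectrum of the extended Rees algebra (with the convention $\sI^{n}=\sO_Y$ for $n\le 0$), equivalently the complement of the strict transform of $Y\times\{0\}$ in the blow-up of $Y\times\A^1$ along $X\times\{0\}$. This is flat over $\A^1$, its generic fibre is $Y\times\bG_m$, and its special fibre is $\Spec_X\big(\Sym_{\sO_X}(\sI/\sI^2)\big)=\bV_X(\sI/\sI^2)=N_{X/Y}$, the map from $X$ being the zero section; the maps $X\times\A^1\to D_{X/Y}\to Y\times\A^1$ are built into the Rees construction. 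For a general lci morphism I would work Zariski-locally on $X$, where $f$ factors as $X\xrightarrow{i}M\xrightarrow{p}Y$ with $i$ a (Koszul-)regular immersion and $p$ smooth. Using the presentation $N_{X/Y}\simeq[N_{X/M}/i^*T_{M/Y}]$ recalled in the text, I would exhibit $D_{X/Y}$ as the quotient of the deformation space $D_{X/M}$ by a group-stack over $\A^1$ that interpolates between the translation action presenting $Y\times\bG_m$ as a quotient of $M\times\bG_m$ over the generic fibre and the linear $i^*T_{M/Y}$-action on $N_{X/M}$ over the special fibre.

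Finally, since $L_{X/Y}$, and hence $N_{X/Y}=\bV_X(L_{X/Y}[-1])$, are intrinsic to $f$, I would verify that these local models are canonically independent of the chosen factorization and glue to a global algebraic stack $D_{X/Y}$ with the required maps and fibres; alternatively, one may bypass the gluing entirely by invoking the derived Rees / deformation-to-the-normal-cone construction of \cite{KhanRydh,KhanVirtual}, which produces $D_{X/Y}$ globally and canonically for any $f$ and whose special fibre is $\bV_X(L_{X/Y}[-1])$ in general. The main obstacle is precisely the genuinely stacky case, where $N_{X/Y}$ is not a scheme: here one must either descend the quotient presentation along the local factorizations, checking cocycle compatibility against the intrinsic $L_{X/Y}$, or control the associated graded of the derived Rees algebra. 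In both routes the lci hypothesis --- that $L_{X/Y}$ is perfect of Tor-amplitude $[-1,0]$ --- is what guarantees flatness of $D_{X/Y}$ over $\A^1$ and ensures that the special fibre is exactly the vector bundle stack $N_{X/Y}$, carrying no spurious derived structure.
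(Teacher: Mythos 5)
Your reduction of the theorem to producing a single stack $D_{X/Y}$, flat over $\A^1$, with maps $X\times\A^1 \to D_{X/Y} \to Y\times\A^1$ and identifications of the two fibres, is correct (the non-zero-divisor argument for Tor-independence of the left-hand squares is exactly right), and so is the regular closed immersion case via the extended Rees algebra. The genuine gap is in the general lci case, and it is twofold. First, your local model does not exist as stated: for a general smooth morphism $p : M \to Y$ there is no group-stack action on $M\times\bG_m$ presenting $Y\times\bG_m$ as a quotient --- that would make $M \to Y$ a torsor under a group-stack, which fails already for $M = \bP^1_Y$ (a torsor trivializes after pullback along any section, forcing $G \simeq \bP^1_Y$ to be a group object, which it is not). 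To get a genuine quotient presentation you must either (a) shrink further so that $M = \A^n_Y$, in which case the \emph{constant} group $\bG_a^n\times\A^1$ acts on $D_{X/M}$ by the Rees-rescaled translation $(v,t)\mapsto(v+ta,t)$ (this preserves the centre $X\times\{0\}$ of the blow-up, hence lifts), and the quotient has the right fibres; or (b) quotient not by a group action but by a \emph{groupoid}, namely the deformation of $X$ inside the \v{C}ech groupoid $M\fibprod_Y M$ along its unit section, and then additionally glue the result, which only lives over the open image of $p$, with $Y\times\bG_m$ over the rest of $Y$. Second, and more seriously, what you call ``the main obstacle'' --- descent of the local models along changes of factorization --- is not a loose end but the entire mathematical content of the theorem in this approach: one must produce canonical comparisons between models attached to different factorizations, verify cocycle conditions, and check compatibility with both fibre identifications. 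Your sketch says this must be ``verified'' but gives no indication of how; that verification is precisely what occupies the classical constructions of Kresch \cite{Kresch} and Manolache \cite{Manolache} which the paper cites.

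Your fallback --- invoking the derived Weil restriction / derived Rees construction of \cite{KhanVirtual, KhanRydh} --- is in substance the paper's own proof, so it is a legitimate exit, but as written it is a citation rather than an argument, and it omits the two points the paper actually has to establish. The paper defines $D_{X/Y}$ as the derived Weil restriction of $X \to Y$ along $0 : Y \to Y\times\A^1$; being defined by a functor of points, it is global and intrinsic, so the gluing problem never arises, and both fibres are computed directly: the derived fibre over $0$ is $\bV_X(L_{X/Y}[-1]) = N_{X/Y}$ by the universal property of the cotangent complex (for $T$ over $0$, the derived fibre $T\fibprodR_{\A^1}0$ is the trivial square-zero extension $\Spec_T(\sO_T\oplus\sO_T[1])$), while the fibre over $\bG_m$ is $Y\times\bG_m$ because the relevant derived fibre products are empty. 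What then still requires proof --- and what you would need to add to make the fallback self-contained --- is (i) algebraicity of this Weil restriction (via \cite{HalpernLeistnerPreygel}, or by reduction to the case $X = \bV_Y(\sE)$ using the local structure of lci morphisms), and (ii) the fact that it is a \emph{classical} stack, which is what converts homotopy-cartesian squares of derived stacks into the asserted cartesian and Tor-independent squares of ordinary algebraic stacks --- equivalently, what delivers the flatness over $\A^1$ on which your whole reduction rests. Without (i) and (ii), even the citation route does not yield the statement as formulated.
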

\begin{proof}
  See \cite[\S 5.1]{Kresch} and \cite[Thm.~2.31]{Manolache}.
  At the referee's request we include the more ``intrinsic'' construction using derived algebraic geometry mentioned in \cite[\S 1.4]{KhanVirtual} (a more general and detailed version of the following argument will appear in \cite{HekkingKhanRydh}).
  
  Denote by $D_{X/Y} \to Y \times \A^1$ the derived Weil restriction of $f : X \to Y$ along $0 : Y \to Y \times \A^1$.
  Thus $D_{X/Y}$ is a derived stack such that for a derived scheme $T$ over $Y\times\A^1$, the $T$-points of $D_{X/Y}$ are given by
  \begin{equation}\label{eq:oyfg010}
    \Hom_{Y\times\A^1}(T, D_{X/Y})
    \simeq \Hom_{Y}(T \fibprodR_{\A^1} 0, X)
  \end{equation}
  where $T \fibprodR_{\A^1} 0$ is the \emph{derived} fibre over $0$.
  In particular, for every derived scheme $T_0$ over $Y$, we have natural isomorphisms
  \begin{multline*}
    \Hom_{Y}(T_0, D_{X/Y} \fibprodR_{\A^1} 0)
    \simeq \Hom_{Y\times\A^1}(T_0, D_{X/Y})\\
    \simeq \Hom_{Y}(T_0 \fibprodR_{\A^1} 0, X)
    \simeq \Hom_{Y}(T_0, N_{X/Y})
  \end{multline*}
  where $T_0$ is regarded over $Y \times \A^1$ by composing with $0 : Y \to Y \times \A^1$, and the last isomorphism comes from the identification
  \[
    T \fibprodR_{\A^1} 0
    = T \fibprod_{0} 0 \fibprodR_{\A^1} 0
    \simeq \Spec_T(\sO_T \oplus \sO_T[1])
  \]
  with the trivial square-zero extension (in the derived sense) over $T$ and the universal property of the cotangent complex in derived algebraic geometry.
  By the Yoneda lemma it follows that $N_{X/Y}$ is the derived fibre of $D_{X/Y} \to \A^1$ over $0$.
  Similarly, the fibre over $\bG_m$ is $Y \times \bG_m$ since
  \begin{multline*}
    \Hom_{Y\times\bG_m}(T_\eta, D_{X/Y} \fibprod_{\A^1} \bG_m)
    \simeq \Hom_{Y\times\A^1}(T_\eta, D_{X/Y})\\
    \simeq \Hom_{Y}(T_\eta \fibprod_{\bG_m} \bG_m \fibprod_{\A^1} 0, X)
    \simeq \Hom_{Y}(\initial, X)
  \end{multline*}
  is naturally isomorphic to $\Hom_{Y\times\bG_m}(T_\eta, Y\times\bG_m) \simeq \{\ast\}$ for all $T_\eta$ over $Y\times\bG_m$.

  Through \eqref{eq:oyfg010} we get a canonical morphism $X \times \A^1 \to D_{X/Y}$ corresponding to $\id_X \in \Hom_Y(X,X)$, which factors $f\times\id : X \times \A^1 \to Y \times \A^1$.
  The commutativity of the two upper squares in \eqref{eq:D} is witnessed by two isomorphisms in the mapping \inftyGrpds
  \begin{align*}
    \Hom_{Y\times\A^1}(X, D_{X/Y}) &\simeq \Hom_Y(X\times 0\fibprodR_{\A^1} 0, X),\\
    \Hom_{Y\times\A^1}(X\times\bG_m, D_{X/Y}) &\simeq \Hom_Y(X\times \bG_m\fibprod_{\A^1} 0, X) \simeq \{\ast\}.
  \end{align*}
  Both squares are homotopy cartesian since the lower two squares and both vertical composite rectangles are.

  So far we have constructed the diagram \eqref{eq:D} in the \inftyCat of derived stacks.
  To show that $D_{X/Y}$ is algebraic, we can appeal to either of two algebraicity results for derived Weil restrictions.
  The first is \cite[Thm.~5.1.1]{HalpernLeistnerPreygel}, which is stated for mapping stacks but applies in view of the formula for derived Weil restriction in Proposition~5.1.14 of \emph{op. cit}.
  Alternatively, in our lci situation there is a more general and easier result in \cite{HekkingKhanRydh}.
  Briefly, the question of algebraicity is local, so using the local structure of lci morphisms (\cite[Prop.~2.3.14]{KhanRydh}) and the fact that derived Weil restriction commutes with fibred products, it boils down to the case where $X = \bV_Y(\sE)$ is a vector bundle over $Y$, whose derived Weil restriction along $0 : Y \to Y\times\A^1$ is the vector bundle stack
  \[ \bV_{Y\times\A^1}\big(0_*(\sE^\vee)^\vee\big). \]
  This argument also shows that $D_{X/Y}$ is in fact a classical algebraic stack.
  Thus \eqref{eq:D} is a diagram in the ordinary category of algebraic stacks, and homotopy cartesianness of the squares translates to cartesianness and Tor-independence.
\end{proof}

\section{Borel--Moore homology of stacks}

Using the extension of the six operations to algebraic stacks defined in \cite{LiuZheng}\footnote{%
  If we work over a base satisfying some strong hypotheses, which hold e.g. for spectra of finite or separably closed fields, then we can also use the formalism of \cite{LaszloOlsson}; cf. \cite[\S 6.5]{LiuZheng}.
} we can define Borel--Moore homology of an algebraic stack $\sX$ (locally of finite type over some base $S$) again by the formula
\[
  \H_k(\sX/S, \Lambda) = \H^{-k}(\sX, p^!(\Lambda))
\]
where $p : \sX \to S$ is the structural morphism.
Equivalently, these are the homology groups of the complexes
\[\begin{multlined}
  \RGamma(\sX/S, \Lambda)
  := \RGamma(\sX, p^!\Lambda)\\
  \simeq \Rlim_{(T,t)} \RGamma(T, p^* t^!\Lambda)
  \simeq \Rlim_{(T,t)} \RGamma(T, (p\circ t)^!\Lambda)(-d_t)[-2d_t],
\end{multlined}\]
where the homotopy limits are over pairs $(T,t)$ where $T$ is a scheme and $t : T \to \sX$ is a smooth morphism of relative dimension $d_t$.
(By Zariski descent, $T$ can also be taken affine.)

It is straightforward to deduce that the localization exact triangle extends to stacks:

\begin{prop}[Localization]
  If $i : \sZ \to \sX$ is a closed immersion with open complement $j : \sU \to \sX$, then we have an exact triangle
  \[
    \RGamma(\sZ/S, \Lambda)
    \xrightarrow{i_*} \RGamma(\sX/S, \Lambda)
    \xrightarrow{j^!} \RGamma(\sU/S, \Lambda),
  \]
  whence a long exact sequence
  \[
    \cdots
    \to \H_k(\sZ/S, \Lambda)
    \xrightarrow{i_*} \H_k(\sX/S, \Lambda)
    \xrightarrow{j^!} \H_k(\sU/S, \Lambda)
    \xrightarrow{\partial} \H_{k-1}(\sZ/S, \Lambda)
    \to \cdots.
  \]
\end{prop}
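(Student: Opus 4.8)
The plan is to obtain the triangle by applying the derived global sections functor $\RGamma(\sX, -)$ to the standard localization recollement triangle on $\sX$, so that all the content reduces to the six-operation formalism for algebraic stacks of \cite{LiuZheng}. Write $p : \sX \to S$ for the structural morphism and set $K := p^!(\Lambda) \in \D(\sX_\et, \Lambda)$, so that $\RGamma(\sX/S, \Lambda) = \RGamma(\sX, K)$ by definition. The closed--open decomposition furnished by $i$ and $j$ provides, as part of the formalism, the exact triangle
\[
  i_* i^! K \too K \too j_* j^* K,
\]
whose first map is the counit of the adjunction $(i_*, i^!)$ and whose second map is the unit of $(j^*, j_*)$; here I use the standard identities $i_* = i_!$ for a closed immersion and $j^* = j^!$ for an open immersion.

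Next I would apply $\RGamma(\sX, -)$ and identify the three terms. Since formation of derived global sections is compatible with pushforward, $\RGamma(\sX, i_*(-)) \simeq \RGamma(\sZ, -)$ and $\RGamma(\sX, j_*(-)) \simeq \RGamma(\sU, -)$. For the coefficients, compatibility of $!$-pullback with composition gives $i^! K = i^! p^! \Lambda \simeq (p\circ i)^!\Lambda$, the dualizing complex defining $\RGamma(\sZ/S, \Lambda)$; and because $j$ is an open immersion $j^* = j^!$, so $j^* K = j^! p^! \Lambda \simeq (p\circ j)^!\Lambda$, defining $\RGamma(\sU/S, \Lambda)$. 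Thus the triangle becomes
\[
  \RGamma(\sZ/S, \Lambda) \xrightarrow{i_*} \RGamma(\sX/S, \Lambda) \xrightarrow{j^!} \RGamma(\sU/S, \Lambda),
\]
where by construction the first arrow is the proper pushforward $i_*$ (induced by $i_* i^! \to \id$) and the second is the Gysin restriction $j^!$ (induced by $\id \to j_* j^*$, under $j^! = j^*$).

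The long exact sequence is then obtained by passing to cohomology: an exact triangle $A \to B \to C$ yields $\cdots \to \H^n(A) \to \H^n(B) \to \H^n(C) \to \H^{n+1}(A) \to \cdots$, and substituting the defining identity $\H_k(\sX/S,\Lambda) = \H^{-k}(\sX, K)$ (and likewise for $\sZ$ and $\sU$) turns the index shift $n \mapsto n+1$ into the drop $k \mapsto k-1$, producing exactly the stated sequence together with its connecting map $\partial$.

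The only genuine content — and hence the point to be careful about — is that the recollement triangle, the identities $i_* = i_!$ and $j^* = j^!$, and the pseudofunctoriality $(p\circ i)^! \simeq i^! p^!$ are all available for \emph{algebraic stacks} and not merely for schemes; this is precisely what the extension of the six operations in \cite{LiuZheng} supplies. If one prefers to avoid invoking the stacky formalism directly, an alternative is to descend along a smooth atlas $t : T \to \sX$: the decomposition pulls back to a closed--open decomposition of $T$, where the localization triangle is classical, and compatibility with the homotopy-limit presentation $\RGamma(\sX/S, \Lambda) \simeq \Rlim_{(T,t)} \RGamma(T, (p\circ t)^!\Lambda)(-d_t)[-2d_t]$ propagates it back to $\sX$; the accompanying Tate twists and shifts are harmless since $d_t$ is locally constant in $t$.
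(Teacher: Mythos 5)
Your proof is correct and is essentially the argument the paper intends: the paper gives no proof at all, merely asserting that the localization triangle "straightforwardly" extends to stacks via the formalism of \cite{LiuZheng}, and what you write out --- applying $\RGamma(\sX,-)$ to the recollement triangle $i_*i^!K \to K \to j_*j^*K$, using $i_!=i_*$, $j^!=j^*$, and $(p\circ i)^!\simeq i^!p^!$, then converting indices via $\H_k=\H^{-k}$ --- is precisely the routine deduction being alluded to. Your identification of the two arrows with proper pushforward and open restriction, and the index shift for the boundary map, are also correct.
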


For example, consider the closed/open pair $(\hat{i},\hat{j})$ from \eqref{eq:D}.
The boundary map gives rise to a specialization map
\begin{multline}\label{eq:sp}
  \sp_{X/Y} : \RGamma(Y/Y, \Lambda)
  \xrightarrow{\mrm{incl}} \RGamma(Y/Y, \Lambda) \oplus \RGamma(Y/Y, \Lambda)(1)[1]\\
  \simeq \RGamma(Y\times\bG_m/Y, \Lambda)[-1]
  \xrightarrow{\partial} \RGamma(N_{X/Y}/Y, \Lambda).
\end{multline}

We will also need homotopy invariance for vector bundle stacks:

\begin{prop}\label{prop:htp}
  Let $\sE$ be the total space of a perfect complex of Tor-amplitude $[0,1]$ over $X$, say of virtual rank $r$.
  Then there is a canonical isomorphism in the derived category of $\Lambda$-modules
  \[
    \RGamma(\sE/Y, \Lambda)
    \simeq \RGamma(X/Y, \Lambda)(r)[2r].
  \]
  In particular,
  \[
    \H_k(\sE/Y, \Lambda)
    \simeq \H_{k-2r}(X/Y, \Lambda)(r)
  \]
  for all $k\in\bZ$.
\end{prop}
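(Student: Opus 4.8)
The plan is to factor the structural morphism of $\sE$ through the bundle projection and reduce everything to a homotopy invariance statement. Write $\pi : \sE \to X$ for the projection and $p_X : X \to Y$ for the structural morphism, set $K_X := p_X^!\Lambda$, and note $p_\sE = p_X \circ \pi$. Since $\sE = \bV_X(E)$ with $E$ of Tor-amplitude $[0,1]$, the morphism $\pi$ is smooth of relative dimension $r$: étale-locally it is the quotient $[\bV_X(E^0)/\bV_X(E^1)]$ for a presentation $E \simeq [E^0 \to E^1]$, which is smooth of relative dimension $\rk(E^0) - \rk(E^1) = r$ (see \cite{BehrendFantechi}, \cite{KhanRydh}). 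By relative purity for smooth morphisms in the formalism of \cite{LiuZheng} --- applied to the non-representable $\pi$ and valid for $r$ of either sign --- we obtain $\pi^! \simeq \pi^*(r)[2r]$, hence
\[ p_\sE^!\Lambda = \pi^! K_X \simeq \pi^*(K_X)(r)[2r]. \]
Applying $\RGamma(\sE, -)$, the Proposition is reduced to the homotopy invariance statement that the pullback $\pi^* : \RGamma(X, K_X) \to \RGamma(\sE, \pi^* K_X)$ is an isomorphism, equivalently that the unit $F \to \pi_*\pi^* F$ is an isomorphism in $\D(X_\et, \Lambda)$ for every $F$.

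I would prove this homotopy invariance as follows. The unit formulation is étale-local on $X$: by smooth base change the formation of $\pi_*\pi^* F$ commutes with pullback along an atlas $X' \to X$, so it suffices to treat the case where $X$ is affine and $E$ admits a global presentation $[E^0 \to E^1]$ of vector bundles, with $\sE \simeq [\bV_X(E^0)/\bV_X(E^1)]$ and $\bV_X(E^1)$ acting by translations. The atlas $\bV_X(E^0) \to \sE$ is then a smooth surjective torsor under $\bV_X(E^1)$, whose \v{C}ech nerve is the bar construction of the action groupoid, with $n$-th term the vector bundle $\bV_X(E^0) \times_X \bV_X(E^1)^{\times_X n}$ over $X$. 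By smooth cohomological descent, $\pi_*\pi^* F$ is the totalization of the corresponding cosimplicial object; classical $\A^1$-invariance of étale cohomology identifies each term with $F$ and collapses the cosimplicial object to the constant one, so that $\pi_*\pi^* F \simeq F$ compatibly with the unit. This yields the desired isomorphism.

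The main obstacle is the tension between these local presentations and the global nature of $\RGamma$; it is precisely this that the reformulation through the unit $F \to \pi_*\pi^* F$ (together with smooth base change) is designed to dissolve, after which the argument bottoms out in the classical $\A^1$-invariance of étale cohomology of schemes and the bar resolution of the action groupoid. A second point requiring care is that relative purity $\pi^! \simeq \pi^*(r)[2r]$ must be available for the non-representable smooth morphism $\pi$ with possibly negative relative dimension $r$; this belongs to the stacky six-functor formalism and can in turn be verified on an atlas. Granting these, the displayed isomorphism follows, and the statement about $\H_k$ is obtained by taking the $(-k)$-th cohomology of both sides, the shift $[2r]$ and twist $(r)$ producing the reindexing $\H_k(\sE/Y, \Lambda) \simeq \H_{k-2r}(X/Y, \Lambda)(r)$.
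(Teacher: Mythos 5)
Your proposal is correct and follows essentially the same route as the paper: factor the structural morphism through the smooth projection $\pi : \sE \to X$, peel off the twist $(r)[2r]$ by smooth purity, and show that $\pi^*$ induces an isomorphism on cohomology by reducing to a local two-term presentation of the perfect complex and invoking classical homotopy invariance of étale cohomology. The differences are only in execution. For purity, the paper does not cite stacky purity as a black box; it builds the isomorphism $\RGamma(\sE, \pi^! f^!\Lambda) \simeq \RGamma(\sE, f^!\Lambda)(r)[2r]$ as a homotopy limit of scheme-level Poincaré duality isomorphisms over smooth charts, flagging in a footnote the homotopy-coherence issue this raises (resolved by a t-structure argument in Liu--Zheng); your appeal to purity for the non-representable smooth $\pi$ in the Liu--Zheng formalism is legitimate, since that is exactly how $\pi^!$ is constructed there. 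For homotopy invariance, the paper localizes using the limit description of $\pi^*$ over smooth affine charts and then uses the zig-zag $\RGamma(X, f^!\Lambda) \to \RGamma(E^1, f^!\Lambda) \gets \RGamma(\sE, f^!\Lambda)$, i.e. homotopy invariance for the vector bundle $E^1 \to X$ and for the $E^0$-torsor $E^1 \twoheadrightarrow \sE$ (split after base change to affines), whereas you reformulate via the unit $F \to \pi_*\pi^*F$, localize on $X$ by smooth base change, and collapse the \v{C}ech nerve of the atlas by levelwise $\A^1$-invariance. Both mechanisms are sound and rest on the same inputs; your unit/\v{C}ech version is a more systematic rendering of the paper's torsor step (the levelwise units assemble canonically into a map from the constant cosimplicial object, so the collapse needs nothing beyond weak contractibility of $\Delta$), while the paper's zig-zag avoids the simplicial bookkeeping altogether.
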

\begin{proof}
  Since the projection $\pi : \sE \to X$ is smooth of relative dimension $r$, we have the Poincaré duality isomorphism
  \[
    \RGamma(\sE, \pi^!f^!\Lambda)
    \simeq \RGamma(\sE, f^!\Lambda)(r)[2r],
  \]
  where there is an implicit $\pi^*$ on the right-hand side.
  This is the homotopy limit over $(T,t)$ of the Poincaré duality isomorphisms
  \[
    \RGamma(T, (\pi \circ t)^!f^!\Lambda)(-d_t)[-2d_t]
    \simeq \RGamma(T, f^!\Lambda)(r)[2r]
  \]
  for the smooth morphism $\pi \circ t : T \to \sX \to Y$ of relative dimension $d_t+r$.\footnote{%
    Note that to form this homotopy limit, we need  the Poincaré duality isomorphism for schemes to be functorial in a homotopy coherent sense; however, this coherence comes for free using a standard t-structure argument, see \cite[Thm.~6.2.9, Rem.~4.1.10]{LiuZheng}.
  }

  Secondly, there is a canonical map
  \[ \pi^* : \RGamma(X, f^!\Lambda) \to \RGamma(\sE, f^!\Lambda) \]
  which (as a consequence of étale descent) can be described as the homotopy limit of the maps $\pi_U^*$, where $\pi_U : \sE \fibprod_Y U \to U$, taken over smooth morphisms $U \to Y$ with $U$ affine.
  Therefore the claim is local on $Y$ and we may assume that the perfect complex defining $\sE$ admits a global resolution, so that $\sE$ is globally the stack quotient $[E^1/E^0]$ of a vector bundle morphism $E^0 \to E^1$.
  In this case $\pi^*$ factors through isomorphisms
  \[
    \RGamma(X, f^!\Lambda)
    \to \RGamma(E^1, f^!\Lambda)
    \gets \RGamma(\sE, f^!\Lambda)
  \]
  by homotopy invariance for the vector bundle $E^1 \to X$ (follows by descent from the case of trivial bundles, see \cite[Exp.~XV, Cor.~2.2]{SGA4}) and for the $E^0$-torsor $E^1 \twoheadrightarrow \sE$ (can be checked after base change to affines, over which vector bundle torsors are split).
\end{proof}

\section{The construction}

We return to the situation of an lci morphism $f : X \to Y$, say of relative virtual dimension $d$.
The $(-1)$-shifted cotangent complex $L_{X/Y}[-1]$ is perfect of Tor-amplitude $[0, 1]$ (of virtual rank $-d$), so \propref{prop:htp} yields a canonical isomorphism
\[
  \RGamma(N_{X/Y}/Y, \Lambda)
  \simeq \RGamma(X/Y, \Lambda)(-d)[-2d].
\]

Combining this with the specialization map \eqref{eq:sp} produces now a canonical map
\begin{equation*}
  \RGamma(Y/Y, \Lambda)
  \xrightarrow{\sp_{X/Y}} \RGamma(N_{X/Y}/Y, \Lambda)
  \simeq \RGamma(X/Y, \Lambda)(-d)[-2d].
\end{equation*}
In particular, the image of the unit $1 \in \RGamma(Y/Y, \Lambda)$ gives rise to a canonical element (a relative fundamental class)
\begin{equation}\label{eq:fund}
  [X/Y] \in \RGamma(X/Y, \Lambda)(-d)[-2d].
\end{equation}
Our Gysin morphism is then the corresponding morphism
\begin{equation}\label{eq:gys}
  \gys_f : \Lambda(d)[2d] \to f^!\Lambda
\end{equation}
in $\D(X_\et, \Lambda)$, and the trace morphism $\tr_f : f_! \Lambda(d)[2d] \to \Lambda$ is its transpose.

It will also be useful to note that these can be refined to natural transformations
\begin{align}
  &\gys_f : f^*(d)[2d] \to f^!\label{eq:natgys}\\
  &\tr_f : f_!f^*(d)[2d] \to \id.\label{eq:nat}
\end{align}
For example, $\tr_f$ is the composite
\[
  f_!f^*(-)(d)[2d]
  \simeq (-) \otimes f_!\Lambda(d)[2d]
  \xrightarrow{\id \otimes \tr_f} (-) \otimes \Lambda
  = \id
\]
where the isomorphism is the projection formula.
Note that when \eqref{eq:gys} is invertible, \eqref{eq:natgys} will also be invertible on dualizable objects in $\D(Y_\et, \Lambda)$ (but not necessarily on arbitrary ones).

\section{Proofs of the asserted properties}

We begin by noting that, in case $f$ is a closed immersion, our construction of the Gysin morphism obviously coincides with that of \cite[\S 3.2]{DegliseJinKhan}, which itself agrees with Gabber's construction \cite[Exp.~XVI, \S 2.3]{ILO} by \cite[4.4.3]{DegliseJinKhan}.
The base change and functoriality properties are proven exactly as in the case of closed immersions, using respectively Tor-independent base change of the deformation space $D_{X/Y}$ (see \cite[Thm.~1.3(ii)]{KhanVirtual}) and the double deformation space associated to lci morphisms $X \to Y \to Z$,
\[
  D_{X/Y/Z} := D_{D_{X/Z} \fibprod_Z Y/D_{X/Z}},
\]
the deformation to the normal stack of the morphism $D_{X/Z} \fibprod_Z Y \to D_{X/Z}$.
See the proof of \cite[Thm.~3.2.21]{DegliseJinKhan}.

Let us show that if $f$ is smooth of relative dimension $d$, then $\gys_f$ is the Poincaré duality isomorphism $f^!(\Lambda) \simeq \Lambda(d)[2d]$.
Form the cartesian square
\[ \begin{tikzcd}
  X \fibprod_Y X \ar{r}{\pr_2}\ar{d}{\pr_1}
  & X \ar{d}{f}
  \\
  X \ar{r}{f}
  & Y
\end{tikzcd} \]
The diagonal morphism $\Delta : X \to X\fibprod_Y X$ is lci of relative virtual dimension $-d$ and the natural transformation $\tr_\Delta : \Delta_!\Delta^*(-d)[-2d] \to \id$ \eqref{eq:nat} gives rise to
\[
  \eta_f :
  \id = \pr_{2,!}\Delta_!\Delta^*\pr_1^*
  \xrightarrow{\tr_\Delta} \pr_{2,!}\pr_1^*(d)[2d]
  \simeq f^*f_!(d)[2d].
\]
We claim that $\eta_f$ and $\tr_f$ form the unit and counit of an adjunction $(f_!, f^*(d)[2d])$.
Indeed, it is easy to check that both composites
\[\begin{tikzcd}[matrix xscale=2, matrix yscale=0.3]
  f_! \ar{r}{f_!(\eta_f)}
  & f_!f^*f_!(d)[2d] \ar{r}{\tr_f \ast f_!}
  & f_!
  \\
  f^*(d)[2d] \ar{r}{\eta_f \ast f^*(d)[2d]}
  & f^*f_!f^*(2d)[4d] \ar{r}{f^* \ast \tr_f(d)[2d]}
  & f^*(d)[2d]
\end{tikzcd}\]
are identity by using the functoriality of the trace for the composite $\pr_1\circ\Delta$ (resp. for the composite $\pr_2\circ\Delta$) and by base change for the trace of $f$.
This argument shows not only that $\gys_f$ is an isomorphism but also that it agrees with the Poincaré duality isomorphisms of \cite[Exp.~XVIII, Thm.~3.2.5]{SGA4} and \cite[Thm.~0.1.4]{LiuZheng}, or equivalently that $\tr_f$ agrees with the trace of \cite[Exp.~XVIII, Thm.~2.9]{SGA4} or \cite[Thm.~0.1.4]{LiuZheng}: indeed, both are counits for the same adjunction.

It remains to show that if $X$ and $Y$ are regular (in which case $f : X \to Y$ is automatically lci), then $\gys_f$ gives the isomorphism $f^!\Lambda \simeq \Lambda(d)[2d]$ asserted in \thmref{thm:gys}\itemref{item:pure}.
But invertibility of $\gys_f$ can be checked after inverse image along a Zariski cover, and by functoriality we have for any open immersion $j : U \hook X$ a commutative diagram
\[ \begin{tikzcd}
  \Lambda(d)[2d] \ar{r}{\gys_j}\ar[equals]{d}
  & j^!\Lambda(d)[2d] \ar{r}{j^!(\gys_f)}
  & j^!f^!\Lambda \ar[equals]{d}
  \\
  \Lambda(d)[2d] \ar{rr}{\gys_{f\circ j}}
  & & (f\circ j)^!\Lambda.
\end{tikzcd} \]
where $\gys_j$ is invertible.
Thus we may localize on $X$ and choose a global factorization through a closed immersion $i : X \hook X'$ and a smooth morphism $p : X' \to Y$.
By functoriality of Gysin morphisms again and the fact that $\gys_p$ is an isomorphism by above, we reduce to the case of a closed immersion between regular schemes (note that $X'$ is still regular).
Finally, since $\gys_f$ agrees with Gabber's construction in this case, the claim now follows from absolute purity \cite[Exp.~XVI, Thm.~3.1.1]{ILO}.

\section{Remarks}

Using the formalism of \cite{LiuZheng}, our construction of the traces $\tr_f$ immediately extends to the case where the schemes $X$ and $Y$ are algebraic stacks.
Absolute Poincaré duality also extends to regular algebraic stacks with the same proof.

We can also allow $X$ and $Y$ to be derived (schemes or stacks), and $f : X \to Y$ to be any \emph{quasi-smooth} morphism.
Indeed, an lci morphism is precisely a quasi-smooth morphism whose source and target happen to be classical (underived).
The construction of \thmref{thm:gys} goes through mutatis mutandis, since the deformation space $D_{X/Y}$ exists in that setting (see \cite[\S 1.4]{KhanVirtual}): it is simply the Weil restriction of $X$ along $0 : Y \hook Y \times \A^1$ (in the derived sense).
For a quasi-smooth morphism, the trace is a kind of categorification of Kontsevich's virtual fundamental class (cf. \eqref{eq:fund}) and gives rise for example to the Gromov--Witten theory of  smooth projective varieties in arbitrary characteristic.
On the other hand, absolute Poincaré duality does not hold for derived schemes whose classical truncations are not regular.

Finally, the construction can be refined from étale cohomology to motivic cohomology.
For this one can use the limit-extended motivic cohomology of algebraic stacks defined in \cite[\S 12]{KhanRavi} as a substitute for \cite{LiuZheng}.
Note that the trace formalism for \emph{flat} maps (as developed in \cite[Exp.~XVIII, Thm.~2.9]{SGA4}) has recently been extended to motivic cohomology by Abe (see \cite{Abe}).
Note that in this setting our proof of absolute Poincaré duality goes through only in equicharacteristic, since absolute purity in motivic cohomology is open in general (see \cite[Thm.~C.1]{DFJKMinus} for the equicharacteristic case).

In the setting of rational and étale motivic cohomology, the results of this paper appeared in a somewhat different form in the preprint \cite{KhanVirtual}.
The present paper is an attempt to give a short and self-contained account without using the language of motives or derived algebraic geometry.

In a future paper, I will explain how to use deformation to the normal stack to generalize Verdier's specialization functor \cite{Verdier}.
This will be combined with a derived version of Laumon's homogeneous Fourier transform \cite{LaumonFourier} to give an analogue of microlocalization in the sense of Kashiwara--Schapira \cite{KashiwaraSchapira} for singular schemes.

\section{Acknowledgments}

I would like to thank Luc Illusie, Jo\"el Riou, and the anonymous referee for helpful comments on previous versions.

These results were first announced during a conference at the Euler International Mathematical Institute in St. Petersburg, Russia in 2019.
I would like to thank the organizers for the invitation to speak there.
At the time, I was partially supported by SFB 1085 Higher Invariants, Universit\"at Regensburg.

%%%%%%%%%

%!TEX root = dualet.tex

\bibliographystyle{halphanum}

Institute of Mathematics,
Academia Sinica,
Taipei 10617,
Taiwan

\end{document}